\newtheorem{theorem}{Theorem}
\theoremstyle{plain}
\newtheorem{definition}{Definition}
\newtheorem{lemma}{Lemma}
\newtheorem{remark}{Remark}
\numberwithin{equation}{section}
\begin{document}
\title[Hermite-Hadamard Inequality]{Hermite-Hadamard's inequalities for
preinvex functions via fractional integrals and related fractional
inequalities}
\author{\.{I}mdat \.{I}\c{s}can}
\address{Department of Mathematics, Faculty of Science and Arts, Giresun
University, Giresun, Turkey}
\email{imdat.iscan@giresun.edu.tr}
\date{March 24, 2012}
\subjclass[2000]{ 26D10, 26D15, 26A51}
\keywords{Hermite-Hadamard inequalities,invex set, preinvex function,
fractional integral}

\begin{abstract}
In this paper, first we have established Hermite- Hadamard's inequalities
for preinvex functions via fractional integrals. Second we extend some
estimates of the right side of a Hermite- Hadamard type inequality for
preinvex functions via fractional integrals.
\end{abstract}

\maketitle

\section{Introduction and Preliminaries}

Let $f:I\subset \mathbb{R\rightarrow R}$ be a convex mapping defined on the
interval $I$ of real numbers and $a,b\in I$ with $a<b$, then

\begin{equation}
f\left( \frac{a+b}{2}\right) \leq \frac{1}{b-a}\dint\limits_{a}^{b}f(x)dx%
\leq \frac{f(a)+f(b)}{2}\text{.}  \label{1-1}
\end{equation}

This doubly inequality is known in the literature as Hermite-Hadamard
integral inequality for convex mapping.We note that Hadamard's inequality
may be regarded as a refinement of the concept of convexity and it follows
easily from Jensen's inequality . For several recent results concerning the
inequality (\ref{1-1}) we refer the interested reader to\ \cite%
{DP00,BOP08,SSO10,KBOP07,PP00} and the references cited therein.

\begin{definition}
The function $f:\left[ a,b\right] \subset \mathbb{R\rightarrow R}$ is said
to be convex if the following inequality holds:%
\begin{equation*}
f\left( tx+(1-t)y\right) \leq tf(x)+(1-t)f(y)
\end{equation*}%
for all $x,y\in \left[ a,b\right] $ and $t\in \left[ 0,1\right] .$ We say
that f is concave if $\left( -f\right) $ is convex.
\end{definition}

In \cite{PP00} Pearce and Pe\v{c}ari\'{c} established the following result
connected with teh right part of (\ref{1-1}).

\begin{theorem}
\label{1.a}Let $f:I^{\circ }\subset \mathbb{R\rightarrow R}$ be a
differentiable mapping on $I^{\circ }$, $a,b\in I^{\circ }$ with $a<b$, and
let $q\geq 1.$ If the mapping $\left\vert f^{\prime }\right\vert ^{q}$
convex on $\left[ a,b\right] $, then 
\begin{equation}
\left\vert \frac{f(a)+f(b)}{2}-\frac{1}{b-a}\dint\limits_{a}^{b}f(x)dx\right%
\vert \leq \frac{b-a}{4}\left[ \frac{\left\vert f^{\prime }(a)\right\vert
^{q}+\left\vert f^{\prime }(b)\right\vert ^{q}}{2}\right] ^{\frac{1}{q}}
\label{0.1}
\end{equation}
\end{theorem}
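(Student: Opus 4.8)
The plan is to reduce the entire estimate to a single integral representation and then control the resulting integral with the power-mean inequality together with the convexity of $\left\vert f^{\prime }\right\vert ^{q}$. First I would establish the identity
\begin{equation*}
\frac{f(a)+f(b)}{2}-\frac{1}{b-a}\int_{a}^{b}f(x)\,dx=\frac{b-a}{2}\int_{0}^{1}(1-2t)f^{\prime }\left( ta+(1-t)b\right) \,dt,
\end{equation*}
which follows from a single integration by parts on the right-hand side, with $u=1-2t$ and $dv=f^{\prime }(ta+(1-t)b)\,dt$: the boundary terms assemble into $\tfrac{f(a)+f(b)}{b-a}$, while the remaining integral reduces, via the change of variable $x=ta+(1-t)b$, to $-\tfrac{2}{(b-a)^{2}}\int_{a}^{b}f$. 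This identity is the engine of the whole argument.

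Taking absolute values and passing them inside the integral gives
\begin{equation*}
\left\vert \frac{f(a)+f(b)}{2}-\frac{1}{b-a}\int_{a}^{b}f(x)\,dx\right\vert \leq \frac{b-a}{2}\int_{0}^{1}\left\vert 1-2t\right\vert \left\vert f^{\prime }\left( ta+(1-t)b\right) \right\vert \,dt.
\end{equation*}
Here the hypothesis $q\geq 1$ enters. I would split the weight as $\left\vert 1-2t\right\vert =\left\vert 1-2t\right\vert ^{1-1/q}\left\vert 1-2t\right\vert ^{1/q}$ and apply the weighted power-mean (Hölder) inequality, producing the factor $\left( \int_{0}^{1}\left\vert 1-2t\right\vert \,dt\right) ^{1-1/q}$ multiplied by $\left( \int_{0}^{1}\left\vert 1-2t\right\vert \left\vert f^{\prime }(ta+(1-t)b)\right\vert ^{q}\,dt\right) ^{1/q}$.

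The convexity of $\left\vert f^{\prime }\right\vert ^{q}$ then yields the pointwise bound $\left\vert f^{\prime }(ta+(1-t)b)\right\vert ^{q}\leq t\left\vert f^{\prime }(a)\right\vert ^{q}+(1-t)\left\vert f^{\prime }(b)\right\vert ^{q}$, so the second factor is dominated by $\left\vert f^{\prime }(a)\right\vert ^{q}\int_{0}^{1}t\left\vert 1-2t\right\vert \,dt+\left\vert f^{\prime }(b)\right\vert ^{q}\int_{0}^{1}(1-t)\left\vert 1-2t\right\vert \,dt$. All three elementary integrals are then computed directly by splitting at $t=\tfrac{1}{2}$: one finds $\int_{0}^{1}\left\vert 1-2t\right\vert \,dt=\tfrac{1}{2}$, while both $\int_{0}^{1}t\left\vert 1-2t\right\vert \,dt$ and $\int_{0}^{1}(1-t)\left\vert 1-2t\right\vert \,dt$ equal $\tfrac{1}{4}$, the equality of the latter two being forced by the symmetry $t\mapsto 1-t$.

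Assembling the constants $\left( \tfrac{1}{2}\right) ^{1-1/q}\left( \tfrac{1}{4}\right) ^{1/q}=\tfrac{1}{2}\left( \tfrac{1}{2}\right) ^{1/q}$ collapses the bound to exactly $\tfrac{b-a}{4}\left[ \tfrac{\left\vert f^{\prime }(a)\right\vert ^{q}+\left\vert f^{\prime }(b)\right\vert ^{q}}{2}\right] ^{1/q}$, which is the claimed estimate. I expect the main obstacle to be the representation itself rather than the inequalities: once the identity is in hand, the power-mean step and the convexity bound are entirely routine, and the only place an error can realistically creep in is in tracking the powers of $\tfrac{1}{2}$ through the Hölder exponent $1-1/q$. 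The case $q=1$ serves as a sanity check, since there the power-mean step is vacuous and the constant must match directly.
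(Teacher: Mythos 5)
Your proof is correct, and it follows essentially the same route as the paper: the paper recovers this theorem as the special case $\eta(b,a)=b-a$, $\alpha=1$ of its Theorem \ref{2.5}, whose proof uses exactly your ingredients --- the integral identity for the trapezoidal difference (Lemma \ref{2.2}, which at $\alpha=1$ is your $(1-2t)$ identity), the power-mean/H\"{o}lder split of the weight $\left\vert 1-2t\right\vert$ into the $1-\tfrac{1}{q}$ and $\tfrac{1}{q}$ powers, the convexity bound on $\left\vert f^{\prime}\right\vert^{q}$, and the same elementary integrals $\tfrac{1}{2}$ and $\tfrac{1}{4}$. Your constant-tracking $\left(\tfrac{1}{2}\right)^{1-1/q}\left(\tfrac{1}{4}\right)^{1/q}=\tfrac{1}{2}\left(\tfrac{1}{2}\right)^{1/q}$ is accurate and reproduces the stated bound.
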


The classical Hermite- Hadamard inequality provides estimates of the mean
value of a continuous convex function $f:\left[ a,b\right] \mathbb{%
\rightarrow R}$.

We give some necessary definitions and mathematical preliminaries of
fractional calculus theory which are used throughout this paper.

\begin{definition}
Let $f\in L\left[ a,b\right] $. The Riemann-Liouville integrals $%
J_{a^{+}}^{\alpha }f$ and $J_{b^{-}}^{\alpha }f$ of oder $\alpha >0$ with $%
a\geq 0$ are defined by

\begin{equation*}
J_{a^{+}}^{\alpha }f(x)=\frac{1}{\Gamma (\alpha )}\dint\limits_{a}^{x}\left(
x-t\right) ^{\alpha -1}f(t)dt,\ x>a
\end{equation*}

and

\begin{equation*}
J_{b^{-}}^{\alpha }f(x)=\frac{1}{\Gamma (\alpha )}\dint\limits_{x}^{b}\left(
t-x\right) ^{\alpha -1}f(t)dt,\ x<b
\end{equation*}

respectively, where $\Gamma (\alpha )$ is the Gamma function and $%
J_{a^{+}}^{0}f(x)=J_{b^{-}}^{0}f(x)=f(x).$
\end{definition}

In the case of $\alpha =1$, the fractional integral reduces to the classical
integral. Properties concerning this operator can be found (\cite{GM97}-\cite%
{P99}).

For some recent result connected with fractional integral see (\cite{SSYB11}-%
\cite{SO12}).

In \cite{SSYB11} Sar\i kaya et al. proved the following Hadamard type
inequalities for fractional integrals as follows.

\begin{theorem}
\label{1.1}Let $f:\left[ a,b\right] \mathbb{\rightarrow R}$ be a positive
function with $0\leq a<b$ and $f\in L\left[ a,b\right] .$ If $f$ is a convex
function on $\left[ a,b\right] $, then the following inequalities for
fractional integrals hold:%
\begin{equation}
f\left( \frac{a+b}{2}\right) \leq \frac{\Gamma (\alpha +1)}{2\left(
b-a\right) ^{\alpha }}\left[ J_{a^{+}}^{\alpha }f(x)+J_{b^{-}}^{\alpha }f(x)%
\right] \leq \frac{f(a)+f(b)}{2}  \label{1-2}
\end{equation}

with $\alpha >0.$
\end{theorem}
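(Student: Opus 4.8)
The plan is to derive both inequalities from the pointwise convexity inequality by integrating against the kernel $t^{\alpha-1}$, which is precisely the weight that converts an ordinary integral over $[0,1]$ into a Riemann--Liouville fractional integral after a linear change of variable. First I would record the two elementary consequences of convexity that feed the two halves of the estimate. For the right-hand bound, applying the definition of convexity to the two symmetric representations of a generic point gives, for every $t\in[0,1]$,
\begin{equation*}
f(ta+(1-t)b)+f(tb+(1-t)a)\le f(a)+f(b),
\end{equation*}
while for the left-hand bound the midpoint identity $\tfrac{1}{2}[(ta+(1-t)b)+(tb+(1-t)a)]=\tfrac{a+b}{2}$ together with convexity yields
\begin{equation*}
2f\left( \frac{a+b}{2}\right) \le f(ta+(1-t)b)+f(tb+(1-t)a).
\end{equation*}

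Next I would multiply each inequality by $t^{\alpha-1}\ge 0$ and integrate in $t$ over $[0,1]$. The crucial computation is the change of variables turning the transported integrals into fractional integrals: substituting $u=ta+(1-t)b$, so that $t=(b-u)/(b-a)$, gives
\begin{equation*}
\int_0^1 t^{\alpha-1}f(ta+(1-t)b)\,dt=\frac{1}{(b-a)^{\alpha}}\int_a^b (b-u)^{\alpha-1}f(u)\,du=\frac{\Gamma(\alpha)}{(b-a)^{\alpha}}J_{b^-}^{\alpha}f(a),
\end{equation*}
and symmetrically $\int_0^1 t^{\alpha-1}f(tb+(1-t)a)\,dt=\frac{\Gamma(\alpha)}{(b-a)^{\alpha}}J_{a^+}^{\alpha}f(b)$. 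Since $\int_0^1 t^{\alpha-1}\,dt=1/\alpha$ and $\alpha\Gamma(\alpha)=\Gamma(\alpha+1)$, multiplying the two integrated inequalities through by $\alpha/2$ converts them directly into the left and right estimates of \eqref{1-2}.

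The argument is essentially computational once the kernel is fixed, so the only point requiring genuine care is the bookkeeping of the substitution; in particular I must check that the orientation reversal ($u=b$ at $t=0$ and $u=a$ at $t=1$) produces the factor $(b-u)^{\alpha-1}$ and hence the $b^-$ operator, while $u=tb+(1-t)a$ produces $(u-a)^{\alpha-1}$ and hence the $a^+$ operator. I expect no analytic obstacle: the hypothesis $f\in L[a,b]$ guarantees the fractional integrals are finite, and $t^{\alpha-1}$ is integrable on $[0,1]$ for every $\alpha>0$, so integrating the convexity inequalities termwise against this kernel is fully justified.
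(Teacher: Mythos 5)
Your proof is correct and is essentially the paper's own approach: Theorem \ref{1.1} is quoted from \cite{SSYB11}, and the proof the paper gives for its generalization, Theorem \ref{2.1}, is exactly your argument specialized to $\eta(b,a)=b-a$ --- the midpoint and endpoint convexity inequalities are multiplied by the kernel $t^{\alpha-1}$, integrated over $\left[0,1\right]$, and converted to fractional integrals by the linear substitution. One bookkeeping slip (harmless here): with the paper's definitions the kernel $(b-u)^{\alpha-1}$ yields $\Gamma(\alpha)J_{a^{+}}^{\alpha}f(b)$ and the kernel $(u-a)^{\alpha-1}$ yields $\Gamma(\alpha)J_{b^{-}}^{\alpha}f(a)$, so your two identifications are swapped, but this does not affect the conclusion because only the sum $J_{a^{+}}^{\alpha}f(b)+J_{b^{-}}^{\alpha}f(a)$ enters the inequality (\ref{1-2}).
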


Using the following identity Sa\i kaya et al. in \cite{SSYB11} established
the following result which hold for convex functions.

\begin{lemma}
\label{1.11}Let $f:\left[ a,b\right] \mathbb{\rightarrow R}$ be a
differentiable mapping on $\left( a,b\right) $ with $a<b$. If $f^{\prime
}\in L\left[ a,b\right] $, then the following equality for fractional
integrals holds:%
\begin{equation}
\frac{f(a)+f(b)}{2}-\frac{\Gamma (\alpha +1)}{2\left( b-a\right) ^{\alpha }}%
\left[ J_{a^{+}}^{\alpha }f(x)+J_{b^{-}}^{\alpha }f(x)\right] =\frac{b-a}{2}%
\dint\limits_{0}^{1}\left[ \left( 1-t\right) ^{\alpha }-t^{\alpha }\right]
f^{\prime }\left( ta+(1-t)b\right) dt  \label{1-3}
\end{equation}
\end{lemma}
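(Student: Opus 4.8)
The plan is to verify the identity by starting from the right-hand side and integrating by parts; the left-hand side then falls out after a single linear change of variable. First I would split the integral into two pieces,
\begin{equation*}
\int_0^1 \left[(1-t)^\alpha - t^\alpha\right] f'\left(ta+(1-t)b\right)\,dt = I_1 - I_2,
\end{equation*}
where $I_1 = \int_0^1 (1-t)^\alpha f'(ta+(1-t)b)\,dt$ and $I_2 = \int_0^1 t^\alpha f'(ta+(1-t)b)\,dt$. The point of separating them is that each factor $(1-t)^\alpha$ and $t^\alpha$ will, after one integration by parts, produce one of the two Riemann--Liouville integrals $J_{b^-}^\alpha f(a)$ and $J_{a^+}^\alpha f(b)$.

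For $I_1$ I would integrate by parts with $u=(1-t)^\alpha$ and $dv = f'(ta+(1-t)b)\,dt$, noting that $v = \frac{1}{a-b} f(ta+(1-t)b)$ because $\frac{d}{dt}(ta+(1-t)b)=a-b$. Since $\alpha>0$ the boundary term vanishes at $t=1$ and contributes $f(b)/(b-a)$ at $t=0$, leaving
\begin{equation*}
I_1 = \frac{f(b)}{b-a} + \frac{\alpha}{a-b}\int_0^1 (1-t)^{\alpha-1} f(ta+(1-t)b)\,dt.
\end{equation*}
Treating $I_2$ the same way (now with $u=t^\alpha$) gives the symmetric expression with $f(a)$ and $t^{\alpha-1}$ in place of $f(b)$ and $(1-t)^{\alpha-1}$.

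The key step is then the substitution $x=ta+(1-t)b$, so that $dt = dx/(a-b)$, $1-t=(x-a)/(b-a)$ and $t=(b-x)/(b-a)$, with the limits $t\in[0,1]$ becoming $x\in[b,a]$. Under this change the remaining integral in $I_1$ becomes $\frac{1}{(b-a)^\alpha}\int_a^b (x-a)^{\alpha-1} f(x)\,dx = \frac{\Gamma(\alpha)}{(b-a)^\alpha} J_{b^-}^\alpha f(a)$, and the corresponding integral in $I_2$ becomes $\frac{\Gamma(\alpha)}{(b-a)^\alpha} J_{a^+}^\alpha f(b)$, directly from the definition of the Riemann--Liouville integrals. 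Applying $\alpha\Gamma(\alpha)=\Gamma(\alpha+1)$ and forming $I_1-I_2$ yields
\begin{equation*}
\int_0^1 \left[(1-t)^\alpha - t^\alpha\right] f'\left(ta+(1-t)b\right)\,dt = \frac{f(a)+f(b)}{b-a} - \frac{\Gamma(\alpha+1)}{(b-a)^{\alpha+1}}\left[ J_{a^+}^\alpha f(b)+J_{b^-}^\alpha f(a)\right];
\end{equation*}
multiplying through by $(b-a)/2$ gives the stated equality.

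I do not expect a genuine conceptual obstacle here, since the argument is two integrations by parts followed by one substitution. The part most prone to error is bookkeeping: tracking the sign from $v = f(\cdot)/(a-b)$, the orientation reversal $x\in[b,a]$ in the change of variable, and matching the resulting integrals to the correct one-sided fractional integrals $J_{a^+}^\alpha f(b)$ versus $J_{b^-}^\alpha f(a)$ (the statement writes $f(x)$, but these are to be read as evaluated at the endpoints $b$ and $a$ respectively). Careful attention to these signs is exactly what makes the two boundary contributions combine into $\tfrac{f(a)+f(b)}{2}$ rather than cancel.
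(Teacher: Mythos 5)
Your proof is correct and takes essentially the same route as the paper: the paper establishes this identity (in the generalized form of Lemma \ref{2.2}, with $\eta(b,a)$ in place of $b-a$) by exactly this decomposition into two pieces, an integration by parts on each, and a linear change of variable identifying the remaining integrals with the Riemann--Liouville operators, then multiplying by half the interval length. Your reading of the misprinted $J_{a^{+}}^{\alpha }f(x)$, $J_{b^{-}}^{\alpha }f(x)$ as $J_{a^{+}}^{\alpha }f(b)$, $J_{b^{-}}^{\alpha }f(a)$ is also the intended one, and your final combined formula carries the signs correctly.
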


\begin{theorem}
\label{1.111}Let $f:\left[ a,b\right] \mathbb{\rightarrow R}$ be a
differentiable mapping on $\left( a,b\right) $ with $a<b$. If $\ \left\vert
f^{\prime }\right\vert $ is a convex function on $\left[ a,b\right] $, then
the following inequalities for fractional integrals holds:%
\begin{equation}
\left\vert \frac{f(a)+f(b)}{2}-\frac{\Gamma (\alpha +1)}{2\left( b-a\right)
^{\alpha }}\left[ J_{a^{+}}^{\alpha }f(x)+J_{b^{-}}^{\alpha }f(x)\right]
\right\vert \leq \frac{b-a}{2\left( \alpha +1\right) }\left( 1-\frac{1}{%
2^{\alpha }}\right) \left[ \left\vert f^{\prime }(a)\right\vert +\left\vert
f^{\prime }(b)\right\vert \right]  \label{1-4}
\end{equation}
\end{theorem}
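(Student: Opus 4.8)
The plan is to begin with the integral identity of Lemma \ref{1.11}, which rewrites the bracketed quantity on the left of (\ref{1-4}) as $\frac{b-a}{2}\int_{0}^{1}\left[ (1-t)^{\alpha }-t^{\alpha }\right] f^{\prime }(ta+(1-t)b)\,dt$. (Note that a convex $\left\vert f^{\prime }\right\vert $ on $[a,b]$ is bounded, hence $f^{\prime }\in L[a,b]$ and Lemma \ref{1.11} indeed applies.) Taking the absolute value of both sides and applying the triangle inequality for integrals immediately reduces the task to bounding
\[
\frac{b-a}{2}\int_{0}^{1}\left\vert (1-t)^{\alpha }-t^{\alpha }\right\vert \cdot \left\vert f^{\prime }(ta+(1-t)b)\right\vert \,dt.
\]

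Next I would use the hypothesis that $\left\vert f^{\prime }\right\vert $ is convex, which by the definition of convexity yields the pointwise estimate $\left\vert f^{\prime }(ta+(1-t)b)\right\vert \leq t\left\vert f^{\prime }(a)\right\vert +(1-t)\left\vert f^{\prime }(b)\right\vert $. Inserting this bound splits the integral into a term weighting $\left\vert f^{\prime }(a)\right\vert $ by $\int_{0}^{1}\left\vert (1-t)^{\alpha }-t^{\alpha }\right\vert t\,dt$ and a term weighting $\left\vert f^{\prime }(b)\right\vert $ by $\int_{0}^{1}\left\vert (1-t)^{\alpha }-t^{\alpha }\right\vert (1-t)\,dt$. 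A key simplification is that the substitution $t\mapsto 1-t$ fixes the factor $\left\vert (1-t)^{\alpha }-t^{\alpha }\right\vert $ while interchanging $t$ and $1-t$, so these two coefficients are equal; hence their common value $C$ satisfies $2C=\int_{0}^{1}\left\vert (1-t)^{\alpha }-t^{\alpha }\right\vert \,dt$, and the bound becomes $\frac{b-a}{2}\,C\left( \left\vert f^{\prime }(a)\right\vert +\left\vert f^{\prime }(b)\right\vert \right) $.

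The remaining and only genuinely technical step is the evaluation of $\int_{0}^{1}\left\vert (1-t)^{\alpha }-t^{\alpha }\right\vert \,dt$, and the main obstacle is the absolute value, whose argument changes sign at $t=1/2$: on $[0,1/2]$ one has $(1-t)^{\alpha }\geq t^{\alpha }$, whereas on $[1/2,1]$ the reverse holds. I would therefore split the integral at $t=1/2$, observe that the two resulting pieces coincide by the same $t\mapsto 1-t$ symmetry, and compute
\[
\int_{0}^{1}\left\vert (1-t)^{\alpha }-t^{\alpha }\right\vert \,dt=2\int_{0}^{1/2}\left[ (1-t)^{\alpha }-t^{\alpha }\right] \,dt=\frac{2}{\alpha +1}\left( 1-\frac{1}{2^{\alpha }}\right)
\]
using the elementary antiderivatives of $(1-t)^{\alpha }$ and $t^{\alpha }$. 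Thus $C=\frac{1}{\alpha +1}\left( 1-\frac{1}{2^{\alpha }}\right) $, and substituting back into $\frac{b-a}{2}\,C\left( \left\vert f^{\prime }(a)\right\vert +\left\vert f^{\prime }(b)\right\vert \right) $ reproduces exactly the right-hand side of (\ref{1-4}), which completes the argument.
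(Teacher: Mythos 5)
Your proof is correct and follows essentially the same route as the paper, which establishes this result through its generalization (Theorem \ref{2.3}, specialized to $\eta(b,a)=b-a$): start from the identity of Lemma \ref{1.11}, apply the triangle inequality and the convexity bound $\left\vert f^{\prime }(ta+(1-t)b)\right\vert \leq t\left\vert f^{\prime }(a)\right\vert +(1-t)\left\vert f^{\prime }(b)\right\vert$, and reduce everything to $\int_{0}^{1/2}\left[ (1-t)^{\alpha }-t^{\alpha }\right] dt=\frac{1}{\alpha +1}\left( 1-\frac{1}{2^{\alpha }}\right)$. Your symmetry argument ($t\mapsto 1-t$) for equating the two coefficient integrals is only a cosmetic reorganization of the paper's splitting of the integral at $t=\frac{1}{2}$.
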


In recent years several extentions and generalizations have been considered
for classical convexity. A significant generalization of convex functions is
that of invex functions introduced by Hanson in \cite{H81}. Weir and Mond 
\cite{WM98} introduced the concept of preinvex functions and applied it to
the establisment of the sufficient optimality conditions and duality in
nonlinear programming. Pini \cite{P91} introduced the concept of
prequasiinvex as a generalization of invex functions. Later, Mohan and Neogy 
\cite{MN95} obtained some properties of generalized preinvex functions. Noor 
\cite{N09}-\cite{N07} has established some Hermite-Hadamard type
inequalities for preinvex and log-preinvex functions.In recent papers Barani
et al. in  \cite{BGD11b} presented some estimates of the right hand side of
a Hermite-Hadamard type inequality in which some preinvex functions are
involved.

In this paper we generalized the results in \cite{BGD11b} and \cite{SSYB11}
for preinvex functions via fractional integrals. Now we recall some notions
in invexity analysis which will be used throught the paper (see \cite%
{A05,YL01} and references therein)

Let $f:A\mathbb{\rightarrow R}$ and $\eta :A\times A\rightarrow 
%TCIMACRO{\U{211d} }%
%BeginExpansion
\mathbb{R}
%EndExpansion
,$where $A$ is a nonempty set in $%
%TCIMACRO{\U{211d} }%
%BeginExpansion
\mathbb{R}
%EndExpansion
^{n}$, be continuous functions.

\begin{definition}
The set $A\subseteq $ $%
%TCIMACRO{\U{211d} }%
%BeginExpansion
\mathbb{R}
%EndExpansion
^{n}$ is said to be invex with respect to $\eta (.,.)$, if for every $x,y\in
A$ and $t\in \left[ 0,1\right] ,$%
\begin{equation*}
x+t\eta (y,x)\in A.
\end{equation*}

The invex set $A$ is also called a $\eta -$connected set.
\end{definition}

It is obvious that every convex set is invex with respect to $\eta (y,x)=y-x$%
, but there exist invex sets which are not convex \cite{A05}.

\begin{definition}
The function $f$ on the invex set $A$ is said to be preinvex with respect to 
$\eta $ if 
\begin{equation*}
f\left( x+t\eta (y,x)\right) \leq \left( 1-t\right) f(x)+tf(y),\ \forall
x,y\in A,\ t\in \left[ 0,1\right] .
\end{equation*}

The function $f$ is said to be preconcave if and only if $-f$ \ is preinvex.
\end{definition}

Mohan and Neogy \cite{MN95} introduced condition C defined as follows

\textbf{Condition C:} Let $A\subseteq $ $%
%TCIMACRO{\U{211d} }%
%BeginExpansion
\mathbb{R}
%EndExpansion
^{n}$ be an open invex subset with respect to $\eta :A\times A\rightarrow 
%TCIMACRO{\U{211d} }%
%BeginExpansion
\mathbb{R}
%EndExpansion
.$ We say that the function $\eta $ satisfies the condition C if for any $%
x,y\in A$ and any $t\in \left[ 0,1\right] ,$%
\begin{eqnarray*}
\eta \left( y,y+t\eta (x,y)\right) &=&-t\eta (x,y) \\
\eta \left( x,y+t\eta (x,y)\right) &=&(1-t)\eta (x,y).
\end{eqnarray*}

Note that for every $x,y\in A$ and every $t\in \left[ 0,1\right] $ from
condition C, we have

\begin{equation}
\eta \left( y+t_{2}\eta (x,y),y+t_{1}\eta (x,y)\right) =(t_{2}-t_{1})\eta
(x,y).  \label{1-5}
\end{equation}%
We will use the condition in our main results.

In \cite{N07b} Noor proved the Hermite-Hadamard inequality for the preinvex
functions as follows:

\begin{theorem}
\label{1.2}Let $f:K=\left[ a,a+\eta (b,a)\right] \rightarrow \left( 0,\infty
\right) $ be a preinvex function on the interval of real numbers $K^{o}$
(the interior of K) and $a,b\in K^{o}$ with $a<a+\eta (b,a)$. Then the
following inequality holds:%
\begin{equation}
f\left( \frac{2a+\eta (b,a)}{2}\right) \leq \frac{1}{\eta (b,a)}%
\dint\limits_{a}^{a+\eta (b,a)}f(x)dx\leq \frac{f(a)+f(b)}{2}  \label{1-55}
\end{equation}
\end{theorem}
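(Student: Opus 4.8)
The plan is to prove the Hermite--Hadamard inequality for preinvex functions (Theorem~\ref{1.2}) by mimicking the classical argument, but working on the invex segment $K=\left[ a,a+\eta(b,a)\right]$ and exploiting the substitution $x=a+t\eta(b,a)$ to translate everything into integrals over $\left[0,1\right]$. First I would establish the right-hand inequality. Setting $x=a+t\eta(b,a)$ so that $dx=\eta(b,a)\,dt$, the middle term becomes
\begin{equation*}
\frac{1}{\eta(b,a)}\dint\limits_{a}^{a+\eta(b,a)}f(x)dx=\dint\limits_{0}^{1}f\left(a+t\eta(b,a)\right)dt.
\end{equation*}
Applying the preinvexity bound $f\left(a+t\eta(b,a)\right)\leq (1-t)f(a)+tf(b)$ pointwise and integrating $t$ over $\left[0,1\right]$ yields $\dint\limits_{0}^{1}\left[(1-t)f(a)+tf(b)\right]dt=\frac{f(a)+f(b)}{2}$, which is exactly the upper estimate.

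For the left-hand inequality I would aim to reproduce the midpoint step of the classical proof. The midpoint of $K$ is $a+\tfrac{1}{2}\eta(b,a)$, i.e. the argument $\frac{2a+\eta(b,a)}{2}$. The natural approach is to write, for each $t\in\left[0,1\right]$, the midpoint as a preinvex combination of the two points $u=a+t\eta(b,a)$ and $v=a+(1-t)\eta(b,a)$, apply preinvexity with parameter $\tfrac12$ relative to the point $u$, and then integrate. Concretely I would invoke Condition~C: from the identity $\eta\!\left(v,u\right)=\left((1-t)-t\right)\eta(b,a)=(1-2t)\eta(b,a)$ guaranteed by \eqref{1-5}, one has $u+\tfrac12\eta(v,u)=a+\tfrac12\eta(b,a)$, so preinvexity gives
\begin{equation*}
f\left(\frac{2a+\eta(b,a)}{2}\right)\leq \tfrac12 f\!\left(a+t\eta(b,a)\right)+\tfrac12 f\!\left(a+(1-t)\eta(b,a)\right).
\end{equation*}
Integrating both sides over $t\in\left[0,1\right]$ and noting that the substitution $t\mapsto 1-t$ maps the second integral onto the first, the right side collapses to $\dint\limits_{0}^{1}f\left(a+t\eta(b,a)\right)dt$, which we already identified as the middle term of \eqref{1-55}. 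This closes the left inequality.

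The main obstacle I anticipate is the left-hand bound: unlike the convex case, where the midpoint is simply $\frac{u+v}{2}$ and preinvexity with $\eta(y,x)=y-x$ applies transparently, here I must verify that $u+\tfrac12\eta(v,u)$ really equals the fixed midpoint $a+\tfrac12\eta(b,a)$ independently of $t$, and this hinges essentially on Condition~C (equation \eqref{1-5}). Without that structural hypothesis on $\eta$, the two endpoints $u$ and $v$ need not bracket the midpoint symmetrically and the argument breaks. Thus the crucial step is the careful application of \eqref{1-5} to rewrite $\eta(v,u)$, after which the remaining computations (the change of variables and the symmetry $t\mapsto 1-t$) are routine.
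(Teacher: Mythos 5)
Your proof is correct, and its core is essentially the approach the paper itself takes: the paper never proves Theorem \ref{1.2} directly (it is quoted from Noor), but its Theorem \ref{2.1} is the fractional generalization whose $\alpha =1$ case is exactly (\ref{1-55}), and the proof there uses precisely your pairing --- apply preinvexity with parameter $\tfrac{1}{2}$ to $x=a+(1-t)\eta (b,a)$, $y=a+t\eta (b,a)$, identify the resulting argument as the fixed midpoint via (\ref{1-5}), and integrate (against the weight $t^{\alpha -1}$ there, against $dt$ here). Two remarks. First, the one place you genuinely diverge is the right-hand inequality: you apply preinvexity $f\left( a+t\eta (b,a)\right) \leq (1-t)f(a)+tf(b)$ pointwise and integrate, which needs no structural hypothesis on $\eta $; the paper instead invokes condition C a second time (inequalities (\ref{2-11}) and (\ref{2-2})) to route through the intermediate quantity $\frac{f(a)+f\left( a+\eta (b,a)\right) }{2}$, which is then compared with $\frac{f(a)+f(b)}{2}$ using preinvexity at $t=1$. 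Your version is simpler and assumes less; the paper's version buys the refined three-term chain appearing in (\ref{2}). Second, you correctly flag that the left-hand inequality hinges on condition C through identity (\ref{1-5}), which is not among the stated hypotheses of Theorem \ref{1.2}; this is a defect of the statement as quoted from the literature rather than of your argument --- indeed, when the paper proves the generalization in Theorem \ref{2.1} it adds condition C as an explicit hypothesis, exactly as your analysis says it must.
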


In \cite{BGD11b} Barani, Gahazanfari, and Dragomir proved the following
theorems:

\begin{theorem}
\label{1.3}Let $A\subseteq $ $%
%TCIMACRO{\U{211d} }%
%BeginExpansion
\mathbb{R}
%EndExpansion
$ be an open invex subset with respect to $\eta :A\times A\rightarrow 
%TCIMACRO{\U{211d} }%
%BeginExpansion
\mathbb{R}
%EndExpansion
.$ Suppose that $f:A\rightarrow 
%TCIMACRO{\U{211d} }%
%BeginExpansion
\mathbb{R}
%EndExpansion
$ is a differentiable function. If $\left\vert f^{\prime }\right\vert $ is
preinvex on $A$ then, for every $a,b\in A$ with $\eta (b,a)\neq 0$ the
following inequalities holds%
\begin{eqnarray}
&&\left\vert \frac{f(a)+f\left( a+\eta (b,a)\right) }{2}-\frac{1}{\eta (b,a)}%
\dint\limits_{a}^{a+\eta (b,a)}f(x)dx\right\vert  \notag \\
&\leq &\frac{\left\vert \eta (b,a)\right\vert }{8}\left[ \left\vert
f^{\prime }(a)\right\vert +\left\vert f^{\prime }(b)\right\vert \right] .
\label{1-6}
\end{eqnarray}
\end{theorem}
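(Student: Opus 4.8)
The plan is to follow the standard Dragomir--Agarwal strategy adapted to the invex setting: first establish an integral identity expressing the left-hand side of \eqref{1-6} as a weighted integral of $f'$ along the segment $a+t\eta(b,a)$, and then estimate it using the preinvexity of $\left\vert f'\right\vert$.

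First I would prove the identity
\[
\frac{f(a)+f(a+\eta(b,a))}{2}-\frac{1}{\eta(b,a)}\int_{a}^{a+\eta(b,a)}f(x)\,dx=\frac{\eta(b,a)}{2}\int_{0}^{1}(2t-1)\,f'\!\left(a+t\eta(b,a)\right)dt.
\]
To obtain this I would start from the right-hand side and integrate by parts with $u=2t-1$ and $dv=f'(a+t\eta(b,a))\,dt$, so that $v=\frac{1}{\eta(b,a)}f(a+t\eta(b,a))$. This is where invexity of $A$ enters: the entire segment $\{a+t\eta(b,a):t\in[0,1]\}$ lies in $A$, so $f$ is differentiable along it and the antiderivative is legitimate. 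The boundary term produces $\frac{f(a)+f(a+\eta(b,a))}{2}$, and the remaining integral, after the substitution $x=a+t\eta(b,a)$, yields $\frac{1}{\eta(b,a)}\int_{a}^{a+\eta(b,a)}f(x)\,dx$. This is the exact analogue of Lemma \ref{1.11} for the ordinary (non-fractional) integral, and it holds irrespective of the sign of $\eta(b,a)$.

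Next I would take absolute values in the identity and move them inside the integral, giving
\[
\left\vert\frac{f(a)+f(a+\eta(b,a))}{2}-\frac{1}{\eta(b,a)}\int_{a}^{a+\eta(b,a)}f(x)\,dx\right\vert\leq\frac{\left\vert\eta(b,a)\right\vert}{2}\int_{0}^{1}\left\vert 2t-1\right\vert\,\bigl\vert f'(a+t\eta(b,a))\bigr\vert\,dt.
\]
The preinvexity of $\left\vert f'\right\vert$, applied with $x=a$ and $y=b$, yields $\bigl\vert f'(a+t\eta(b,a))\bigr\vert\leq(1-t)\left\vert f'(a)\right\vert+t\left\vert f'(b)\right\vert$, which I would insert into the integrand.

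Finally I would evaluate the two elementary integrals $\int_{0}^{1}\left\vert 2t-1\right\vert(1-t)\,dt$ and $\int_{0}^{1}\left\vert 2t-1\right\vert t\,dt$, each of which equals $\tfrac14$ (splitting the interval at $t=\tfrac12$, or using the symmetry $t\mapsto 1-t$ together with $\int_{0}^{1}\left\vert 2t-1\right\vert\,dt=\tfrac12$). This collapses the bound to $\frac{\left\vert\eta(b,a)\right\vert}{2}\cdot\tfrac14\left[\left\vert f'(a)\right\vert+\left\vert f'(b)\right\vert\right]=\frac{\left\vert\eta(b,a)\right\vert}{8}\left[\left\vert f'(a)\right\vert+\left\vert f'(b)\right\vert\right]$, which is precisely \eqref{1-6}. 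I expect no serious obstacle: the only delicate point is getting the integration-by-parts identity right, in particular tracking the sign and the factor $\eta(b,a)$ through the substitution, after which everything reduces to routine elementary integration. I also note that Condition C is not needed for this estimate, since preinvexity is invoked directly at the endpoints $a$ and $b$.
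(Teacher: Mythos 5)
Your proof is correct and follows essentially the same route as the paper: your integration-by-parts identity is exactly the $\alpha=1$ case of the paper's Lemma \ref{2.2}, and the subsequent estimate (preinvexity of $\left\vert f^{\prime }\right\vert$ applied at the endpoints, then the elementary integrals $\int_{0}^{1}\left\vert 2t-1\right\vert (1-t)\,dt=\int_{0}^{1}\left\vert 2t-1\right\vert t\,dt=\frac{1}{4}$) mirrors the proof of Theorem \ref{2.3}, of which the stated inequality is precisely the $\alpha=1$ special case. A minor bonus of your direct argument is that it handles $\eta (b,a)<0$ as well, whereas the paper's fractional route formally assumes $a<a+\eta (b,a)$.
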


\begin{theorem}
\label{1.b}Let $A\subseteq $ $%
%TCIMACRO{\U{211d} }%
%BeginExpansion
\mathbb{R}
%EndExpansion
$ be an open invex subset with respect to $\eta :A\times A\rightarrow 
%TCIMACRO{\U{211d} }%
%BeginExpansion
\mathbb{R}
%EndExpansion
.$ Suppose that $f:A\rightarrow 
%TCIMACRO{\U{211d} }%
%BeginExpansion
\mathbb{R}
%EndExpansion
$ is a differentiable function. Assume that $p\in 
%TCIMACRO{\U{211d} }%
%BeginExpansion
\mathbb{R}
%EndExpansion
$ with $p>1.$ If $\left\vert f^{\prime }\right\vert ^{\frac{p}{p-1}}$ is
preinvex on $A$ then, for every $a,b\in A$ with $\eta (b,a)\neq 0$ the
following inequalities holds 
\begin{eqnarray}
&&\left\vert \frac{f(a)+f\left( a+\eta (b,a)\right) }{2}-\frac{1}{\eta (b,a)}%
\dint\limits_{a}^{a+\eta (b,a)}f(x)dx\right\vert  \notag \\
&\leq &\frac{\left\vert \eta (b,a)\right\vert }{2(p+1)^{\frac{1}{p}}}\left[
\left\vert f^{\prime }(a)\right\vert ^{\frac{p}{p-1}}+\left\vert f^{\prime
}(b)\right\vert ^{^{\frac{p}{p-1}}}\right] ^{^{\frac{p-1}{p}}}.  \label{1-7}
\end{eqnarray}
\end{theorem}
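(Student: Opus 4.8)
The plan is to adapt the Dragomir--Agarwal argument to the invex setting, combining a single integral identity with H\"older's inequality and the preinvexity hypothesis on $\left\vert f^{\prime }\right\vert ^{\frac{p}{p-1}}$. The starting point is the representation
\[
\frac{f(a)+f\left( a+\eta (b,a)\right) }{2}-\frac{1}{\eta (b,a)}\int_{a}^{a+\eta (b,a)}f(x)\,dx=\frac{\eta (b,a)}{2}\int_{0}^{1}\left( 2t-1\right) f^{\prime }\left( a+t\eta (b,a)\right) \,dt,
\]
which is precisely the identity underlying Theorem \ref{1.3}. I would prove it by integrating the right-hand side by parts with $u=2t-1$ and $dv=f^{\prime }(a+t\eta (b,a))\,dt$, so that $v=\eta (b,a)^{-1}f(a+t\eta (b,a))$; evaluating the boundary terms at $t=0$ and $t=1$ and applying the substitution $x=a+t\eta (b,a)$ to the remaining integral recovers the left-hand side. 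The invexity of $A$ guarantees $a+t\eta (b,a)\in A$ for all $t\in \left[ 0,1\right] $, so every quantity above is well defined along the whole segment.

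Next I would take absolute values and apply H\"older's inequality with exponent $p>1$ and its conjugate $q=\frac{p}{p-1}$:
\[
\left\vert \frac{\eta (b,a)}{2}\int_{0}^{1}\left( 2t-1\right) f^{\prime }\left( a+t\eta (b,a)\right) dt\right\vert \leq \frac{\left\vert \eta (b,a)\right\vert }{2}\left( \int_{0}^{1}\left\vert 2t-1\right\vert ^{p}dt\right) ^{1/p}\left( \int_{0}^{1}\left\vert f^{\prime }\left( a+t\eta (b,a)\right) \right\vert ^{q}dt\right) ^{1/q}.
\]
The first factor is elementary: by the symmetry of $\left\vert 2t-1\right\vert $ about $t=\tfrac{1}{2}$ (or the substitution $u=2t-1$) one gets $\int_{0}^{1}\left\vert 2t-1\right\vert ^{p}dt=\tfrac{1}{p+1}$, hence this factor equals $(p+1)^{-1/p}$.

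For the second factor I would invoke the preinvexity of $\left\vert f^{\prime }\right\vert ^{q}$, which furnishes the pointwise bound $\left\vert f^{\prime }(a+t\eta (b,a))\right\vert ^{q}\leq (1-t)\left\vert f^{\prime }(a)\right\vert ^{q}+t\left\vert f^{\prime }(b)\right\vert ^{q}$; integrating over $\left[ 0,1\right] $ gives $\int_{0}^{1}\left\vert f^{\prime }(a+t\eta (b,a))\right\vert ^{q}dt\leq \tfrac{1}{2}\left( \left\vert f^{\prime }(a)\right\vert ^{q}+\left\vert f^{\prime }(b)\right\vert ^{q}\right) $. Collecting the three estimates and using $1/q=\frac{p-1}{p}$ yields the bound $\frac{\left\vert \eta (b,a)\right\vert }{2(p+1)^{1/p}}\left( \frac{\left\vert f^{\prime }(a)\right\vert ^{q}+\left\vert f^{\prime }(b)\right\vert ^{q}}{2}\right) ^{1/q}$; since $2^{1/q}\geq 1$ this is in turn dominated by the asserted right-hand side, so (\ref{1-7}) follows.

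The only genuinely delicate step is the integration-by-parts identity, where one must track the factor $\eta (b,a)$ produced by the chain rule and the orientation of the integral when $\eta (b,a)<0$; this is exactly where the absolute value $\left\vert \eta (b,a)\right\vert $ and the hypothesis $\eta (b,a)\neq 0$ come into play. Once the identity is established, the H\"older estimate and the preinvexity bound are routine.
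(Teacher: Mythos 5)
Your proof is correct and follows essentially the same route as the paper: your $(2t-1)$ identity is exactly the paper's Lemma \ref{2.2} specialized to $\alpha =1$, and your subsequent H\"{o}lder estimate with $\int_{0}^{1}\left\vert 2t-1\right\vert ^{p}dt=\frac{1}{p+1}$ together with the preinvexity bound on $\left\vert f^{\prime }\right\vert ^{q}$ is precisely the argument of Theorem \ref{2.4} at $\alpha =1$. You even recover the sharper constant carrying the extra factor $2^{-1/q}$, which is the paper's inequality (\ref{2-8}) in that case, before relaxing it to the stated bound (\ref{1-7}).
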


\section{Hermite-Hadamard type inequalities for preinvex functions via
fractional integrals}

\begin{theorem}
\label{2.1}Let $A\subseteq $ $%
%TCIMACRO{\U{211d} }%
%BeginExpansion
\mathbb{R}
%EndExpansion
$ be an open invex subset with respect to $\eta :A\times A\rightarrow 
%TCIMACRO{\U{211d} }%
%BeginExpansion
\mathbb{R}
%EndExpansion
$ and $a,b\in A$ with $a<a+\eta (b,a).$ If $f:\left[ a,a+\eta (b,a)\right]
\rightarrow \left( 0,\infty \right) $ is a preinvex function, $f\in L\left[
a,a+\eta (b,a)\right] $ and $\eta $ satisfies condition C then, the
following inequalities for fractional integrals holds:%
\begin{eqnarray}
f\left( \frac{2a+\eta (b,a)}{2}\right) &\leq &\frac{\Gamma (\alpha +1)}{%
2\eta ^{\alpha }(b,a)}\left[ J_{a^{+}}^{\alpha }f(a+\eta (b,a))+J_{\left(
a+\eta (b,a)\right) ^{-}}^{\alpha }f(a)\right]  \notag \\
&\leq &\frac{f(a)+f(a+\eta (b,a))}{2}\leq \frac{f(a)+f(b)}{2}  \label{2}
\end{eqnarray}

with $\alpha >0.$
\end{theorem}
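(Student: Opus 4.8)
The plan is to mirror the structure of Sar\i kaya et al.'s proof of Theorem \ref{1.1}, replacing the straight-line combination $ta+(1-t)b$ by the invex path $a+t\eta(b,a)$ and using condition C (in the form of \eqref{1-5}) to supply the two geometric identities that the convex proof gets for free. Throughout I would write $c=a+\eta(b,a)$ and $q_{t}=a+t\eta(b,a)$, so that $q_{0}=a$, $q_{1}=c$, and by \eqref{1-5} one has $\eta(q_{s},q_{r})=(s-r)\eta(b,a)$ for all $r,s\in[0,1]$.

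For the leftmost inequality I would first record the midpoint identity $q_{1/2}=q_{t}+\tfrac12\eta(q_{1-t},q_{t})$, which follows from $\eta(q_{1-t},q_{t})=(1-2t)\eta(b,a)$. Preinvexity applied with base point $q_{t}$ and direction toward $q_{1-t}$ then gives $2f\!\left(\frac{2a+\eta(b,a)}{2}\right)\le f(a+t\eta(b,a))+f(a+(1-t)\eta(b,a))$ for every $t\in[0,1]$. Multiplying by $t^{\alpha-1}$ and integrating over $[0,1]$, the left side yields $\tfrac{2}{\alpha}f\!\left(\frac{2a+\eta(b,a)}{2}\right)$, while the substitutions $x=a+t\eta(b,a)$ and $x=a+(1-t)\eta(b,a)$ turn the two integrals on the right into $\frac{\Gamma(\alpha)}{\eta^{\alpha}(b,a)}J_{c^{-}}^{\alpha}f(a)$ and $\frac{\Gamma(\alpha)}{\eta^{\alpha}(b,a)}J_{a^{+}}^{\alpha}f(c)$ respectively; multiplying through by $\alpha/2$ and using $\alpha\Gamma(\alpha)=\Gamma(\alpha+1)$ produces exactly the first inequality.

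For the middle inequality the key observation is that \eqref{1-5} gives $\eta(c,a)=\eta(q_{1},q_{0})=\eta(b,a)$, so that $q_{t}=a+t\eta(c,a)$. Hence preinvexity with endpoints $a$ and $c$ yields the sharpened estimate $f(a+t\eta(b,a))\le(1-t)f(a)+tf(c)$, and likewise $f(a+(1-t)\eta(b,a))\le tf(a)+(1-t)f(c)$, whose sum is bounded by $f(a)+f(c)$. Using the same two substitutions, the middle quantity equals $\frac{\alpha}{2}\int_{0}^{1}t^{\alpha-1}\big[f(a+t\eta(b,a))+f(a+(1-t)\eta(b,a))\big]\,dt$, and feeding in the last bound gives $\frac{\alpha}{2}\int_{0}^{1}t^{\alpha-1}\big(f(a)+f(c)\big)\,dt=\frac{f(a)+f(c)}{2}$. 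The rightmost inequality is then immediate: taking $t=1$ in the definition of preinvexity gives $f(c)=f(a+\eta(b,a))\le f(b)$, whence $\frac{f(a)+f(c)}{2}\le\frac{f(a)+f(b)}{2}$.

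The routine parts are the two changes of variable and the Gamma-function bookkeeping. The only real subtlety, and the step I would be most careful about, is the correct application of condition C: one must verify both the midpoint identity $q_{1/2}=q_{t}+\tfrac12\eta(q_{1-t},q_{t})$ and the identity $\eta(c,a)=\eta(b,a)$, since it is precisely these that let preinvexity replace the elementary facts $\tfrac{a+b}{2}=\tfrac12(ta+(1-t)b)+\tfrac12((1-t)a+tb)$ and $f(ta+(1-t)b)\le(1-t)f(a)+tf(b)$ used in the convex case. I would also check that the intermediate points $q_{t},q_{1-t}$ and the combination $q_{t}+\tfrac12\eta(q_{1-t},q_{t})$ all lie in the invex set $A$, which is guaranteed by invexity of $A$ together with $a,b\in A$.
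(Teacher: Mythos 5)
Your proposal is correct and follows essentially the same route as the paper: the midpoint inequality $2f\left(\frac{2a+\eta(b,a)}{2}\right)\leq f\left(a+t\eta(b,a)\right)+f\left(a+(1-t)\eta(b,a)\right)$ obtained from preinvexity with $t=\frac{1}{2}$ and condition C, multiplication by $t^{\alpha-1}$ and integration with the same changes of variable, and the bound $f\left(a+t\eta(b,a)\right)+f\left(a+(1-t)\eta(b,a)\right)\leq f(a)+f\left(a+\eta(b,a)\right)$ via condition C for the middle inequality (you derive it from $\eta\left(a+\eta(b,a),a\right)=\eta(b,a)$ with base point $a$, while the paper writes the same points from base point $a+\eta(b,a)$ -- a cosmetic difference). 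Your explicit treatment of the last inequality, $f\left(a+\eta(b,a)\right)\leq f(b)$ by taking $t=1$ in the definition of preinvexity, is in fact cleaner than the paper's terse closing remark.
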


\begin{proof}
Since $a,b\in A$ and $A$ is an invex set with respect to $\eta $, for every $%
t\in \left[ 0,1\right] $, we have $a+t\eta (b,a)\in A.$ By preinvexity of $f$%
, we have for every $x,y\in \left[ a,a+\eta (b,a)\right] $ with $t=\frac{1}{2%
}$%
\begin{equation*}
f\left( x+\frac{\eta (y,x)}{2}\right) \leq \frac{f(x)+f(y)}{2}
\end{equation*}

i.e. with $x=a+(1-t)\eta (b,a),\ \ y=a+t\eta (b,a)$ from inequality (\ref%
{1-5}) we get

\begin{eqnarray}
&&2f\left( a+(1-t)\eta (b,a)+\frac{\eta (a+t\eta (b,a),a+(1-t)\eta (b,a))}{2}%
\right)  \notag \\
&=&2f\left( a+(1-t)\eta (b,a)+\frac{(2t-1)\eta (b,a))}{2}\right) =2f\left( 
\frac{2a+\eta (b,a)}{2}\right)  \notag \\
&\leq &f\left( a+(1-t)\eta (b,a)\right) +f\left( a+t\eta (b,a)\right)
\label{2-1}
\end{eqnarray}

Multiplying both sides (\ref{2-1}) by $t^{\alpha -1}$, then integrating the
resulting inequality with respect to $t$ over $\left[ 0,1\right] ,$ we obtain%
\begin{eqnarray*}
&&\frac{2}{\alpha }f\left( \frac{2a+\eta (b,a)}{2}\right) \\
&\leq &\dint\limits_{0}^{1}t^{\alpha -1}f\left( a+(1-t)\eta (b,a)\right)
dt+\dint\limits_{0}^{1}t^{\alpha -1}f\left( a+t\eta (b,a)\right) dt \\
&=&\frac{1}{\eta ^{\alpha }(b,a)}\left[ \dint\limits_{a}^{a+\eta
(b,a)}\left( a+\eta (b,a)-u\right) ^{\alpha
-1}f(u)du+\dint\limits_{a}^{a+\eta (b,a)}\left( u-a\right) ^{\alpha -1}f(u)du%
\right] \\
&=&\frac{\Gamma (\alpha )}{2\eta ^{\alpha }(b,a)}\left[ J_{a^{+}}^{\alpha
}f(a+\eta (b,a))+J_{\left( a+\eta (b,a)\right) ^{-}}^{\alpha }f(a)\right]
\end{eqnarray*}

i.e.

\begin{equation*}
f\left( \frac{2a+\eta (b,a)}{2}\right) \leq \frac{\Gamma (\alpha +1)}{2\eta
^{\alpha }(b,a)}\left[ J_{a^{+}}^{\alpha }f(a+\eta (b,a))+J_{a+\eta
(b,a)^{-}}^{\alpha }f(a)\right]
\end{equation*}

and the fist inequality is proved.

For the proof of the second inequality in (\ref{2-1}) we first note that if $%
f$ is a preinvex function on $\left[ a,a+\eta (b,a)\right] $ and the mapping 
$\eta $ satisfies condition C then for every $t\in \left[ 0,1\right] ,$ from
inequality (\ref{1-5}) it yields%
\begin{eqnarray}
f\left( a+t\eta (b,a)\right)  &=&f\left( a+\eta (b,a)+(1-t)\eta (a,a+\eta
(b,a))\right)   \notag \\
&\leq &tf\left( a+\eta (b,a)\right) +(1-t)f(a)  \label{2-11}
\end{eqnarray}

and similarly%
\begin{eqnarray*}
f\left( a+(1-t)\eta (b,a)\right) &=&f\left( a+\eta (b,a)+t\eta (a,a+\eta
(b,a))\right) \\
&\leq &(1-t)f\left( a+\eta (b,a)\right) +tf(a).
\end{eqnarray*}

By adding these inequalities we have

\begin{equation}
f\left( a+t\eta (b,a)\right) +f\left( a+(1-t)\eta (b,a)\right) \leq
f(a)+f\left( a+\eta (b,a)\right)  \label{2-2}
\end{equation}

Then multiplying both (\ref{2-2}) by $t^{\alpha -1}$ and integrating the
resulting inequality with respect to $t$ over $\left[ 0,1\right] ,$ we obtain%
\begin{equation*}
\dint\limits_{0}^{1}t^{\alpha -1}f\left( a+t\eta (b,a)\right)
dt+\dint\limits_{0}^{1}t^{\alpha -1}f\left( a+(1-t)\eta (b,a)\right) dt\leq 
\left[ f(a)+f\left( a+\eta (b,a)\right) \right] \dint\limits_{0}^{1}t^{%
\alpha -1}dt.
\end{equation*}

i.e.%
\begin{equation*}
\frac{\Gamma (\alpha )}{\eta ^{\alpha }(b,a)}\left[ J_{a^{+}}^{\alpha
}f(a+\eta (b,a))+J_{\left( a+\eta (b,a)\right) ^{-}}^{\alpha }f(a)\right]
\leq \frac{f(a)+f\left( a+\eta (b,a)\right) }{\alpha }.
\end{equation*}

Using the mapping $\eta $ satisfies condition C the proof is completed.
\end{proof}

\begin{remark}
a) If in Theorem \ref{2.1}, we let $\eta (b,a)=b-a$, then inequality (\ref{2}%
) become inequality (\ref{1-2}) of Theorem \ref{1.1}.

b) If in Theorem \ref{2.1}, we let $\alpha =1$, then inequality (\ref{2})
become inequality (\ref{1-55}) of Theorem \ref{1.2}.
\end{remark}

\begin{lemma}
\label{2.2}Let $A\subseteq $ $%
%TCIMACRO{\U{211d} }%
%BeginExpansion
\mathbb{R}
%EndExpansion
$ be an open invex subset with respect to $\eta :A\times A\rightarrow 
%TCIMACRO{\U{211d} }%
%BeginExpansion
\mathbb{R}
%EndExpansion
$ and $a,b\in A$ with $a<a+\eta (b,a).$ Suppose that $f:A\rightarrow 
%TCIMACRO{\U{211d} }%
%BeginExpansion
\mathbb{R}
%EndExpansion
$ is a differentiable function. If $f^{\prime }$ is preinvex function on $A$
and $f^{\prime }\in L\left[ a,a+\eta (b,a)\right] $ then, the following
equality holds:%
\begin{eqnarray}
&&\frac{f(a)+f\left( a+\eta (b,a)\right) }{2}-\frac{\Gamma (\alpha +1)}{%
2\eta ^{\alpha }(b,a)}\left[ J_{a^{+}}^{\alpha }f(a+\eta (b,a))+J_{\left(
a+\eta (b,a)\right) ^{-}}^{\alpha }f(a)\right]  \notag \\
&=&\frac{\eta (b,a)}{2}\dint\limits_{0}^{1}\left[ t^{\alpha }-\left(
1-t\right) ^{\alpha }\right] f^{\prime }\left( a+t\eta (b,a)\right) dt
\label{2-3}
\end{eqnarray}
\end{lemma}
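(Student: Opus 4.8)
The plan is to verify the identity by starting from the right-hand side and reducing it to the left-hand side through integration by parts followed by a change of variables. Since the claimed equality is purely an integral identity, the preinvexity of $f^{\prime}$ and the assumption $f^{\prime}\in L[a,a+\eta(b,a)]$ serve only to guarantee that all the integrals below are finite and that integration by parts is legitimate; no convexity-type inequality is actually invoked. Note also that, as in Lemma \ref{1.11}, this is the $\eta$-analogue obtained by replacing $b-a$ with $\eta(b,a)$ throughout.

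First I would split the integral on the right into two pieces,
\begin{equation*}
\int_0^1\left[t^{\alpha}-(1-t)^{\alpha}\right]f^{\prime}\left(a+t\eta(b,a)\right)dt=I_1-I_2,
\end{equation*}
where $I_1=\int_0^1 t^{\alpha}f^{\prime}(a+t\eta(b,a))\,dt$ and $I_2=\int_0^1 (1-t)^{\alpha}f^{\prime}(a+t\eta(b,a))\,dt$. For $I_1$ I would integrate by parts with $u=t^{\alpha}$ and $dv=f^{\prime}(a+t\eta(b,a))\,dt$, noting that an antiderivative of $dv$ is $v=\frac{1}{\eta(b,a)}f(a+t\eta(b,a))$. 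The boundary term contributes $\frac{1}{\eta(b,a)}f(a+\eta(b,a))$, and the remaining integral is $-\frac{\alpha}{\eta(b,a)}\int_0^1 t^{\alpha-1}f(a+t\eta(b,a))\,dt$. Treating $I_2$ the same way, with $u=(1-t)^{\alpha}$, produces the boundary term $-\frac{1}{\eta(b,a)}f(a)$ together with $+\frac{\alpha}{\eta(b,a)}\int_0^1 (1-t)^{\alpha-1}f(a+t\eta(b,a))\,dt$; the sign flip here comes from differentiating $(1-t)^{\alpha}$.

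Next I would convert the two leftover integrals into Riemann--Liouville fractional integrals via the substitution $u=a+t\eta(b,a)$, so that $t=\frac{u-a}{\eta(b,a)}$, $1-t=\frac{a+\eta(b,a)-u}{\eta(b,a)}$ and $dt=\frac{du}{\eta(b,a)}$, with $u$ running from $a$ to $a+\eta(b,a)$. Under this change the integral in $I_1$ becomes $\frac{1}{\eta^{\alpha}(b,a)}\int_a^{a+\eta(b,a)}(u-a)^{\alpha-1}f(u)\,du=\frac{\Gamma(\alpha)}{\eta^{\alpha}(b,a)}J_{(a+\eta(b,a))^{-}}^{\alpha}f(a)$, matching the definition of the right-sided operator with lower endpoint $a$, while the integral in $I_2$ becomes $\frac{\Gamma(\alpha)}{\eta^{\alpha}(b,a)}J_{a^{+}}^{\alpha}f(a+\eta(b,a))$, matching the left-sided operator evaluated at $a+\eta(b,a)$. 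Using $\alpha\Gamma(\alpha)=\Gamma(\alpha+1)$ and assembling $I_1-I_2$, then multiplying through by $\frac{\eta(b,a)}{2}$, should reproduce the claimed left-hand side exactly.

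The computation itself is routine; the step I expect to require the most care is the bookkeeping in the change of variables---specifically, correctly pairing $(u-a)^{\alpha-1}$ with the \emph{right}-sided integral $J_{(a+\eta(b,a))^{-}}^{\alpha}f(a)$ and $(a+\eta(b,a)-u)^{\alpha-1}$ with the \emph{left}-sided integral $J_{a^{+}}^{\alpha}f(a+\eta(b,a))$, and tracking the minus sign that arises from integrating by parts against $(1-t)^{\alpha}$. A companion check is that the boundary terms combine to give $\frac{f(a)+f(a+\eta(b,a))}{\eta(b,a)}$ with the correct overall sign before the factor $\frac{\eta(b,a)}{2}$ is applied.
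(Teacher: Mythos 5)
Your proposal is correct and follows essentially the same route as the paper's own proof: split the integral into the $t^{\alpha}$ and $(1-t)^{\alpha}$ pieces, integrate each by parts against $f^{\prime}\left( a+t\eta (b,a)\right)$, and convert the remaining integrals into $J_{\left( a+\eta (b,a)\right) ^{-}}^{\alpha }f(a)$ and $J_{a^{+}}^{\alpha }f(a+\eta (b,a))$ by the substitution $u=a+t\eta (b,a)$, then multiply by $\frac{\eta (b,a)}{2}$. Your side remark that preinvexity of $f^{\prime }$ plays no role beyond integrability is also accurate.
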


\begin{proof}
It suffices to note that 
\begin{eqnarray}
I &=&\dint\limits_{0}^{1}\left[ t^{\alpha }-\left( 1-t\right) ^{\alpha }%
\right] f^{\prime }\left( a+t\eta (b,a)\right) dt  \notag \\
&=&\left[ \dint\limits_{0}^{1}t^{\alpha }f^{\prime }\left( a+t\eta
(b,a)\right) dt\right] +\left[ -\dint\limits_{0}^{1}\left( 1-t\right)
^{\alpha }f^{\prime }\left( a+t\eta (b,a)\right) dt\right]  \notag \\
&&I_{1}+I_{2}  \label{2-4}
\end{eqnarray}

integrating by parts%
\begin{eqnarray}
I_{1} &=&\dint\limits_{0}^{1}t^{\alpha }f^{\prime }\left( a+t\eta
(b,a)\right) dt  \notag \\
&=&\left. t^{\alpha }\frac{f\left( a+t\eta (b,a)\right) }{\eta (b,a)}%
\right\vert _{0}^{1}-\dint\limits_{0}^{1}\alpha t^{\alpha -1}\frac{f\left(
a+t\eta (b,a)\right) }{\eta (b,a)}dt  \notag \\
&=&\frac{f\left( a+\eta (b,a)\right) }{\eta (b,a)}-\frac{\alpha }{\eta (b,a)}%
\dint\limits_{a}^{a+\eta (b,a)}\left( \frac{x-a}{\eta (b,a)}\right) ^{\alpha
-1}\frac{f\left( x\right) }{\eta (b,a)}dx  \notag \\
&=&\frac{f\left( a+\eta (b,a)\right) }{\eta (b,a)}-\frac{\Gamma (\alpha +1)}{%
\eta ^{\alpha +1}(b,a)}J_{\left( a+\eta (b,a)\right) ^{-}}^{\alpha }f(a)
\label{2-5}
\end{eqnarray}

and similarly we get,%
\begin{eqnarray}
I_{2} &=&-\dint\limits_{0}^{1}\left( 1-t\right) ^{\alpha }f^{\prime }\left(
a+t\eta (b,a)\right) dt  \notag \\
&=&\left. -\left( 1-t\right) ^{\alpha }\frac{f\left( a+t\eta (b,a)\right) }{%
\eta (b,a)}\right\vert _{0}^{1}-\dint\limits_{0}^{1}\alpha \left( 1-t\right)
^{\alpha -1}\frac{f\left( a+t\eta (b,a)\right) }{\eta (b,a)}dt  \notag \\
&=&\frac{f\left( a\right) }{\eta (b,a)}-\frac{\alpha }{\eta (b,a)}%
\dint\limits_{a}^{a+\eta (b,a)}\left( \frac{a+t\eta (b,a)-x}{\eta (b,a)}%
\right) ^{\alpha -1}\frac{f\left( x\right) }{\eta (b,a)}dx  \notag \\
&=&\frac{f\left( a\right) }{\eta (b,a)}-\frac{\Gamma (\alpha +1)}{\eta
^{\alpha +1}(b,a)}J_{a^{+}}^{\alpha }f(a+\eta (b,a))  \label{2-6}
\end{eqnarray}

Using (\ref{2-5}) and (\ref{2-6}) in (\ref{2-4}), it follows that%
\begin{equation*}
I=\frac{f(a)+f\left( a+\eta (b,a)\right) }{2}-\frac{\Gamma (\alpha +1)}{%
2\eta ^{\alpha }(b,a)}\left[ J_{a^{+}}^{\alpha }f(a+\eta (b,a))+J_{\left(
a+\eta (b,a)\right) ^{-}}^{\alpha }f(a)\right] .
\end{equation*}

Thus, by multiplying both sides by $\frac{\eta (b,a)}{2}$, we have
conclusion (\ref{2-3}).
\end{proof}

\begin{remark}
If in Lemma \ref{2.2}, we let $\eta (b,a)=b-a$, then equality (\ref{2-3})
become inequality (\ref{1-3}) of Lemma \ref{1.11}.
\end{remark}

\begin{theorem}
\label{2.3}Let $A\subseteq $ $%
%TCIMACRO{\U{211d} }%
%BeginExpansion
\mathbb{R}
%EndExpansion
$ be an open invex subset with respect to $\eta :A\times A\rightarrow 
%TCIMACRO{\U{211d} }%
%BeginExpansion
\mathbb{R}
%EndExpansion
$ and $a,b\in A$ with $a<a+\eta (b,a)$ such that $f^{\prime }\in L\left[
a,a+\eta (b,a)\right] $. Suppose that $f:A\rightarrow 
%TCIMACRO{\U{211d} }%
%BeginExpansion
\mathbb{R}
%EndExpansion
$ is a differentiable function. If $\left\vert f^{\prime }\right\vert $ is
preinvex function on $A$ then the following inequality for fractional
integrals with $\alpha >0$ holds: 
\begin{eqnarray}
&&\left\vert \frac{f(a)+f\left( a+\eta (b,a)\right) }{2}-\frac{\Gamma
(\alpha +1)}{2\eta ^{\alpha }(b,a)}\left[ J_{a^{+}}^{\alpha }f(a+\eta
(b,a))+J_{\left( a+\eta (b,a)\right) ^{-}}^{\alpha }f(a)\right] \right\vert
\label{2-7} \\
&\leq &\frac{\eta (b,a)}{2\left( \alpha +1\right) }\left( 1-\frac{1}{%
2^{\alpha }}\right) \left[ \left\vert f^{\prime }(a)\right\vert +\left\vert
f^{\prime }(b)\right\vert \right]  \notag
\end{eqnarray}
\end{theorem}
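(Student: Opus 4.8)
The plan is to apply the integral identity of Lemma \ref{2.2} and then control the resulting integral through the preinvexity of $\left\vert f^{\prime }\right\vert $. Starting from the equality (\ref{2-3}), I would take absolute values on both sides and move the modulus inside the integral by the triangle inequality, which gives
\[
\left\vert \frac{f(a)+f\left( a+\eta (b,a)\right) }{2}-\frac{\Gamma (\alpha +1)}{2\eta ^{\alpha }(b,a)}\left[ J_{a^{+}}^{\alpha }f(a+\eta (b,a))+J_{\left( a+\eta (b,a)\right) ^{-}}^{\alpha }f(a)\right] \right\vert \leq \frac{\eta (b,a)}{2}\int_{0}^{1}\left\vert t^{\alpha }-\left( 1-t\right) ^{\alpha }\right\vert \left\vert f^{\prime }\left( a+t\eta (b,a)\right) \right\vert dt.
\]

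Next I would invoke the hypothesis that $\left\vert f^{\prime }\right\vert $ is preinvex on $A$. Applying the definition of preinvexity with $x=a$ and $y=b$ yields $\left\vert f^{\prime }\left( a+t\eta (b,a)\right) \right\vert \leq \left( 1-t\right) \left\vert f^{\prime }(a)\right\vert +t\left\vert f^{\prime }(b)\right\vert $. Substituting this bound and distributing produces two integrals: one weighting $\left\vert f^{\prime }(a)\right\vert $ by $\left( 1-t\right) $ and one weighting $\left\vert f^{\prime }(b)\right\vert $ by $t$, each against the common kernel $\left\vert t^{\alpha }-\left( 1-t\right) ^{\alpha }\right\vert $.

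The decisive observation is that these two integrals coincide. Under the substitution $t\mapsto 1-t$ the kernel $\left\vert t^{\alpha }-\left( 1-t\right) ^{\alpha }\right\vert $ is invariant while the weights $t$ and $\left( 1-t\right) $ exchange, so
\[
\int_{0}^{1}\left\vert t^{\alpha }-\left( 1-t\right) ^{\alpha }\right\vert \left( 1-t\right) dt=\int_{0}^{1}\left\vert t^{\alpha }-\left( 1-t\right) ^{\alpha }\right\vert t\, dt =: C,
\]
whence the bracketed sum reduces to $\left[ \left\vert f^{\prime }(a)\right\vert +\left\vert f^{\prime }(b)\right\vert \right] C$ with $2C=\int_{0}^{1}\left\vert t^{\alpha }-\left( 1-t\right) ^{\alpha }\right\vert dt$.

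The one genuine computation is the evaluation of $C$. Since $t^{\alpha }-\left( 1-t\right) ^{\alpha }\geq 0$ precisely when $t\geq 1/2$, I would split the domain at $t=1/2$ and remove the absolute value accordingly; by the same symmetry the integral equals $2\int_{1/2}^{1}\left[ t^{\alpha }-\left( 1-t\right) ^{\alpha }\right] dt$. An elementary antiderivative computation gives $\int_{1/2}^{1}\left[ t^{\alpha }-\left( 1-t\right) ^{\alpha }\right] dt=\frac{1}{\alpha +1}\left( 1-\frac{1}{2^{\alpha }}\right) $, hence $C=\frac{1}{\alpha +1}\left( 1-\frac{1}{2^{\alpha }}\right) $. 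Combining this with the prefactor $\frac{\eta (b,a)}{2}$ yields exactly the right-hand side of (\ref{2-7}). The only delicate point in the argument is the sign analysis when stripping the absolute value in this last integral; all remaining steps are routine, and setting $\eta (b,a)=b-a$ recovers Theorem \ref{1.111}.
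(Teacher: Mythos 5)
Your proposal is correct and follows essentially the same route as the paper's proof: Lemma \ref{2.2}, the triangle inequality, the preinvexity bound $\left\vert f^{\prime }\left( a+t\eta (b,a)\right) \right\vert \leq (1-t)\left\vert f^{\prime }(a)\right\vert +t\left\vert f^{\prime }(b)\right\vert $, and the evaluation $\int_{1/2}^{1}\left[ t^{\alpha }-(1-t)^{\alpha }\right] dt=\frac{1}{\alpha +1}\left( 1-\frac{1}{2^{\alpha }}\right) $. The only difference is organizational: you invoke the $t\mapsto 1-t$ symmetry of the kernel before splitting at $t=1/2$, whereas the paper splits first and combines the two halves afterwards, which amounts to the same computation.
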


\begin{proof}
Using lemma \ref{2.2} and the preinvexity of $\left\vert f^{\prime
}\right\vert $ we get%
\begin{eqnarray*}
&&\left\vert \frac{f(a)+f\left( a+\eta (b,a)\right) }{2}-\frac{\Gamma
(\alpha +1)}{2\eta ^{\alpha }(b,a)}\left[ J_{a^{+}}^{\alpha }f(a+\eta
(b,a))+J_{\left( a+\eta (b,a)\right) ^{-}}^{\alpha }f(a)\right] \right\vert
\\
&\leq &\frac{\eta (b,a)}{2}\dint\limits_{0}^{1}\left\vert t^{\alpha }-\left(
1-t\right) ^{\alpha }\right\vert \left\vert f^{\prime }\left( a+t\eta
(b,a)\right) \right\vert dt \\
&\leq &\frac{\eta (b,a)}{2}\dint\limits_{0}^{1}\left\vert t^{\alpha }-\left(
1-t\right) ^{\alpha }\right\vert \left[ (1-t)\left\vert f^{\prime }\left(
a\right) \right\vert +t\left\vert f^{\prime }\left( b\right) \right\vert %
\right] dt \\
&\leq &\frac{\eta (b,a)}{2}\left\{ \dint\limits_{0}^{\frac{1}{2}}\left[
\left( 1-t\right) ^{\alpha }-t^{\alpha }\right] \left[ (1-t)\left\vert
f^{\prime }\left( a\right) \right\vert +t\left\vert f^{\prime }\left(
b\right) \right\vert \right] dt+\dint\limits_{\frac{1}{2}}^{1}\left[
t^{\alpha }-\left( 1-t\right) ^{\alpha }\right] \left[ (1-t)\left\vert
f^{\prime }\left( a\right) \right\vert +t\left\vert f^{\prime }\left(
b\right) \right\vert \right] dt\right\} \\
&=&\frac{\eta (b,a)}{2}\left[ \left\vert f^{\prime }\left( a\right)
\right\vert +\left\vert f^{\prime }\left( b\right) \right\vert \right]
\left( \dint\limits_{0}^{\frac{1}{2}}\left[ \left( 1-t\right) ^{\alpha
}-t^{\alpha }\right] dt\right) \\
&=&\frac{\eta (b,a)}{2\left( \alpha +1\right) }\left( 1-\frac{1}{2^{\alpha }}%
\right) \left[ \left\vert f^{\prime }(a)\right\vert +\left\vert f^{\prime
}(b)\right\vert \right] ,
\end{eqnarray*}%
which completes the proof.
\end{proof}

\begin{remark}
a) If in Theorem \ref{2.3}, we let $\eta (b,a)=b-a$, then inequality (\ref%
{2-7}) become inequality (\ref{1-4}) of Theorem \ref{1.111}.

b) If in Theorem \ref{2.3}, we let $\alpha =1$, then inequality (\ref{2-7})
become inequality (\ref{1-6}) of Theorem \ref{1.3}.

c) In Theorem \ref{2.3}, assume that $\eta $ satisfies condition C and using
inequality (\ref{2-11}) we get%
\begin{eqnarray*}
&&\left\vert \frac{f(a)+f\left( a+\eta (b,a)\right) }{2}-\frac{\Gamma
(\alpha +1)}{2\eta ^{\alpha }(b,a)}\left[ J_{a^{+}}^{\alpha }f(a+\eta
(b,a))+J_{\left( a+\eta (b,a)\right) ^{-}}^{\alpha }f(a)\right] \right\vert
\\
&\leq &\frac{\eta (b,a)}{2\left( \alpha +1\right) }\left( 1-\frac{1}{%
2^{\alpha }}\right) \left[ \left\vert f^{\prime }(a)\right\vert +\left\vert
f^{\prime }(a+\eta (b,a))\right\vert \right]
\end{eqnarray*}

\begin{theorem}
\label{2.4}Let $A\subseteq $ $%
%TCIMACRO{\U{211d} }%
%BeginExpansion
\mathbb{R}
%EndExpansion
$ be an open invex subset with respect to $\eta :A\times A\rightarrow 
%TCIMACRO{\U{211d} }%
%BeginExpansion
\mathbb{R}
%EndExpansion
$ and $a,b\in A$ with $a<a+\eta (b,a)$ such that $f^{\prime }\in L\left[
a,a+\eta (b,a)\right] $. Suppose that $f:A\rightarrow 
%TCIMACRO{\U{211d} }%
%BeginExpansion
\mathbb{R}
%EndExpansion
$ is a differentiable function. If $\left\vert f^{\prime }\right\vert ^{q}$
is preinvex function on $A$ for some fixed $q>1$ then the following
inequality holds:%
\begin{eqnarray}
&&\left\vert \frac{f(a)+f\left( a+\eta (b,a)\right) }{2}-\frac{\Gamma
(\alpha +1)}{2\eta ^{\alpha }(b,a)}\left[ J_{a^{+}}^{\alpha }f(a+\eta
(b,a))+J_{\left( a+\eta (b,a)\right) ^{-}}^{\alpha }f(a)\right] \right\vert
\label{2-8} \\
&\leq &\frac{\eta (b,a)}{2\left( \alpha p+1\right) ^{\frac{1}{p}}}\left( 
\frac{\left\vert f^{\prime }(a)\right\vert ^{q}+\left\vert f^{\prime
}(b)\right\vert ^{q}}{2}\right) ^{\frac{1}{q}}  \notag
\end{eqnarray}%
where $\frac{1}{p}+\frac{1}{q}=1$ and $\alpha \in \left[ 0,1\right] .$
\end{theorem}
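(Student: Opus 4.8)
The plan is to follow the same template as the proof of Theorem \ref{2.3}, but replace the crude bounding of the single factor $\left\vert t^{\alpha}-(1-t)^{\alpha}\right\vert$ with an application of H\"older's inequality. Starting from the identity in Lemma \ref{2.2}, I would write
\begin{equation*}
\left\vert \frac{f(a)+f\left( a+\eta (b,a)\right) }{2}-\frac{\Gamma (\alpha +1)}{2\eta ^{\alpha }(b,a)}\left[ J_{a^{+}}^{\alpha }f(a+\eta (b,a))+J_{\left( a+\eta (b,a)\right) ^{-}}^{\alpha }f(a)\right] \right\vert \leq \frac{\eta (b,a)}{2}\dint\limits_{0}^{1}\left\vert t^{\alpha }-(1-t)^{\alpha }\right\vert \left\vert f^{\prime }\left( a+t\eta (b,a)\right) \right\vert dt,
\end{equation*}
and then split the single integral into a product of two integrals via H\"older with exponents $p$ and $q$. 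This yields the factor $\left(\int_{0}^{1}\left\vert t^{\alpha}-(1-t)^{\alpha}\right\vert^{p}dt\right)^{1/p}$ times $\left(\int_{0}^{1}\left\vert f'(a+t\eta(b,a))\right\vert^{q}dt\right)^{1/q}$.

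The second factor is the easy one: by preinvexity of $\left\vert f'\right\vert^{q}$ I bound $\left\vert f'(a+t\eta(b,a))\right\vert^{q}\leq (1-t)\left\vert f'(a)\right\vert^{q}+t\left\vert f'(b)\right\vert^{q}$, integrate over $[0,1]$, and obtain $\tfrac{1}{2}\bigl(\left\vert f'(a)\right\vert^{q}+\left\vert f'(b)\right\vert^{q}\bigr)$, which supplies exactly the $\bigl(\tfrac{\left\vert f'(a)\right\vert^{q}+\left\vert f'(b)\right\vert^{q}}{2}\bigr)^{1/q}$ appearing on the right of \eqref{2-8}. The main obstacle is the first factor, where I need the bound $\int_{0}^{1}\left\vert t^{\alpha}-(1-t)^{\alpha}\right\vert^{p}dt\leq \tfrac{1}{\alpha p+1}$. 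This is where the hypothesis $\alpha\in[0,1]$ is essential: for such $\alpha$ one has the elementary inequality $\left\vert t^{\alpha}-(1-t)^{\alpha}\right\vert^{p}\leq \left\vert t^{\alpha p}-(1-t)^{\alpha p}\right\vert$ on $[0,1]$ (the map $s\mapsto s^{p}$ applied to the difference is dominated by the difference of the $p$th powers when $\alpha\le 1$), so that
\begin{equation*}
\dint\limits_{0}^{1}\left\vert t^{\alpha }-(1-t)^{\alpha }\right\vert^{p}dt\leq \dint\limits_{0}^{1}\left\vert t^{\alpha p}-(1-t)^{\alpha p}\right\vert dt=2\dint\limits_{0}^{1/2}\bigl[(1-t)^{\alpha p}-t^{\alpha p}\bigr]dt=\frac{1}{\alpha p+1},
\end{equation*}
the last equality following from a direct evaluation of the elementary integral after exploiting the symmetry about $t=\tfrac{1}{2}$.

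Assembling the two factors gives the claimed constant $\tfrac{\eta(b,a)}{2(\alpha p+1)^{1/p}}$, completing the proof. I expect the delicate point to be justifying the pointwise inequality $\left\vert t^{\alpha}-(1-t)^{\alpha}\right\vert^{p}\leq\left\vert t^{\alpha p}-(1-t)^{\alpha p}\right\vert$ rigorously; one clean way is to observe it suffices to check it on $[0,\tfrac{1}{2}]$ by symmetry, where both sides are nonnegative, and then reduce to showing $\bigl((1-t)^{\alpha}-t^{\alpha}\bigr)^{p}\leq (1-t)^{\alpha p}-t^{\alpha p}$, which follows from the fact that for $0<r\le 1$ and $0\le y\le x$ one has $(x-y)^{1/r}\le x^{1/r}-y^{1/r}$ applied with $r=1/p\le 1$, $x=(1-t)^{\alpha}$, $y=t^{\alpha}$. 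If one prefers to avoid this subtlety entirely, an alternative is to bound $\int_{0}^{1}\left\vert t^{\alpha}-(1-t)^{\alpha}\right\vert^{p}dt$ above by $\int_{0}^{1}\bigl(t^{\alpha}+(1-t)^{\alpha}\bigr)^{p}dt$ or to work the estimate out directly, but these weaken the constant, so the sharper pointwise inequality is the route I would take.
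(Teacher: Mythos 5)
Your overall architecture---start from Lemma \ref{2.2}, apply H\"{o}lder's inequality with exponents $p$ and $q$, and handle the second factor by preinvexity of $\left\vert f^{\prime }\right\vert ^{q}$---is exactly the paper's, and the second factor is treated correctly. The gap is in the first factor. Your pointwise inequality $\left\vert t^{\alpha }-(1-t)^{\alpha }\right\vert ^{p}\leq \left\vert t^{\alpha p}-(1-t)^{\alpha p}\right\vert $ is true (it is just $(x-y)^{p}\leq x^{p}-y^{p}$ for $x\geq y\geq 0$ and $p\geq 1$, and in fact it needs no restriction on $\alpha $ whatsoever), but your evaluation of the resulting integral is wrong:
\begin{equation*}
2\int\limits_{0}^{1/2}\left[ (1-t)^{\alpha p}-t^{\alpha p}\right] dt=\frac{2
}{\alpha p+1}\left( 1-\frac{1}{2^{\alpha p}}\right) ,
\end{equation*}
not $\frac{1}{\alpha p+1}$. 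The two agree only when $\alpha p=1$; for $\alpha p>1$, which the hypotheses allow (e.g.\ $\alpha =1$, $p=2$, where your integral equals $\frac{1}{2}$ while $\frac{1}{\alpha p+1}=\frac{1}{3}$), your bound is strictly larger than $\frac{1}{\alpha p+1}$, so the stated constant $\frac{\eta (b,a)}{2\left( \alpha p+1\right) ^{1/p}}$ is not reached. The failure is intrinsic to your choice of majorant: on $\left[ 0,\frac{1}{2}\right] $ one has $(1-2t)^{\alpha p}\leq (1-t)^{\alpha p}-t^{\alpha p}$ precisely when $\alpha p\geq 1$, i.e.\ your pointwise bound is the weaker of the two exactly in the regime where the argument breaks down.

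The repair is the step you bypassed, which is what the paper does: for $\alpha \in \left[ 0,1\right] $ use subadditivity of $s\mapsto s^{\alpha }$ to get $\left\vert t_{1}^{\alpha }-t_{2}^{\alpha }\right\vert \leq \left\vert t_{1}-t_{2}\right\vert ^{\alpha }$ for $t_{1},t_{2}\in \left[ 0,1\right] $, hence $\left\vert t^{\alpha }-(1-t)^{\alpha }\right\vert ^{p}\leq \left\vert 1-2t\right\vert ^{\alpha p}$, and then compute exactly
\begin{equation*}
\int\limits_{0}^{1}\left\vert 1-2t\right\vert ^{\alpha p}dt=\int\limits_{0}^{1/2}(1-2t)^{\alpha p}dt+\int\limits_{1/2}^{1}(2t-1)^{\alpha p}dt=\frac{1}{\alpha p+1}.
\end{equation*}
Note also that it is this subadditivity inequality, not the inequality $(x-y)^{p}\leq x^{p}-y^{p}$, that genuinely requires $\alpha \in \left[ 0,1\right] $; your proposal misattributes where that hypothesis enters.
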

\end{remark}

\begin{proof}
From lemma\ref{2.2} and using H\"{o}lder inequality with properties of
modulus, we have%
\begin{eqnarray*}
&&\left\vert \frac{f(a)+f\left( a+\eta (b,a)\right) }{2}-\frac{\Gamma
(\alpha +1)}{2\eta ^{\alpha }(b,a)}\left[ J_{a^{+}}^{\alpha }f(a+\eta
(b,a))+J_{\left( a+\eta (b,a)\right) ^{-}}^{\alpha }f(a)\right] \right\vert
\\
&\leq &\frac{\eta (b,a)}{2}\dint\limits_{0}^{1}\left\vert t^{\alpha }-\left(
1-t\right) ^{\alpha }\right\vert \left\vert f^{\prime }\left( a+t\eta
(b,a)\right) \right\vert dt \\
&\leq &\frac{\eta (b,a)}{2}\left( \dint\limits_{0}^{1}\left\vert t^{\alpha
}-\left( 1-t\right) ^{\alpha }\right\vert ^{p}dt\right) ^{\frac{1}{p}}\left(
\dint\limits_{0}^{1}\left\vert f^{\prime }\left( a+t\eta (b,a)\right)
\right\vert ^{q}dt\right) ^{\frac{1}{q}}.
\end{eqnarray*}%
We know that for $\alpha \in \left[ 0,1\right] $ and $\forall t_{1},t_{2}\in %
\left[ 0,1\right] $, 
\begin{equation*}
\left\vert t_{1}^{\alpha }-t_{2}^{\alpha }\right\vert \leq \left\vert
t_{1}-t_{2}\right\vert ^{\alpha },
\end{equation*}%
therefore%
\begin{eqnarray*}
\dint\limits_{0}^{1}\left\vert t^{\alpha }-\left( 1-t\right) ^{\alpha
}\right\vert ^{p}dt &\leq &\dint\limits_{0}^{1}\left\vert 1-2t\right\vert
^{\alpha p}dt \\
&=&\dint\limits_{0}^{\frac{1}{2}}\left[ 1-2t\right] ^{\alpha
p}dt+\dint\limits_{\frac{1}{2}}^{1}\left[ 2t-1\right] ^{\alpha p}dt \\
&=&\frac{1}{\alpha p+1}.
\end{eqnarray*}%
Since $\left\vert f^{\prime }\right\vert ^{q}$ is convex on $\left[ a,a+\eta
(b,a)\right] ,$ we have inequality (\ref{2-8}), which completes the proof.
\end{proof}

\begin{remark}
a) If in Theorem \ref{2.4}, we let $\eta (b,a)=b-a$ and $\alpha =1$then
inequality (\ref{2-8}) become inequality (\ref{1-7}) of Theorem\ref{1.b}.

b) In Theorem \ref{2.4}, assume that $\eta $ satisfies condition C and using
inequality (\ref{2-11}) we get%
\begin{eqnarray*}
&&\left\vert \frac{f(a)+f\left( a+\eta (b,a)\right) }{2}-\frac{\Gamma
(\alpha +1)}{2\eta ^{\alpha }(b,a)}\left[ J_{a^{+}}^{\alpha }f(a+\eta
(b,a))+J_{\left( a+\eta (b,a)\right) ^{-}}^{\alpha }f(a)\right] \right\vert
\\
&\leq &\frac{\eta (b,a)}{2\left( \alpha p+1\right) ^{\frac{1}{p}}}\left( 
\frac{\left\vert f^{\prime }(a)\right\vert ^{q}+\left\vert f^{\prime
}(a+\eta (b,a))\right\vert ^{q}}{2}\right) ^{\frac{1}{q}}.
\end{eqnarray*}
\end{remark}

\begin{theorem}
\label{2.5}Let $A\subseteq $ $%
%TCIMACRO{\U{211d} }%
%BeginExpansion
\mathbb{R}
%EndExpansion
$ be an open invex subset with respect to $\eta :A\times A\rightarrow 
%TCIMACRO{\U{211d} }%
%BeginExpansion
\mathbb{R}
%EndExpansion
$ and $a,b\in A$ with $a<a+\eta (b,a)$ such that $f^{\prime }\in L\left[
a,a+\eta (b,a)\right] $. Suppose that $f:A\rightarrow 
%TCIMACRO{\U{211d} }%
%BeginExpansion
\mathbb{R}
%EndExpansion
$ is a differentiable function. If $\left\vert f^{\prime }\right\vert ^{q}$
is preinvex function on $A$ for some fixed $q>1$ then the following
inequality holds:%
\begin{eqnarray}
&&\left\vert \frac{f(a)+f\left( a+\eta (b,a)\right) }{2}-\frac{\Gamma
(\alpha +1)}{2\eta ^{\alpha }(b,a)}\left[ J_{a^{+}}^{\alpha }f(a+\eta
(b,a))+J_{\left( a+\eta (b,a)\right) ^{-}}^{\alpha }f(a)\right] \right\vert
\label{2-9} \\
&\leq &\frac{\eta (b,a)}{\left( \alpha +1\right) }\left( 1-\frac{1}{%
2^{\alpha }}\right) \left[ \frac{\left\vert f^{\prime }(a)\right\vert
^{q}+\left\vert f^{\prime }(b)\right\vert ^{q}}{2}\right] ^{\frac{1}{q}} 
\notag
\end{eqnarray}%
where $\frac{1}{p}+\frac{1}{q}=1$ and $\alpha >0.$
\end{theorem}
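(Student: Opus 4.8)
The plan is to start, just as in the proofs of Theorems \ref{2.3} and \ref{2.4}, from the integral identity of Lemma \ref{2.2} and pass to absolute values, obtaining
$$\left\vert \frac{f(a)+f(a+\eta(b,a))}{2} - \frac{\Gamma(\alpha+1)}{2\eta^\alpha(b,a)}\left[ J_{a^{+}}^{\alpha }f(a+\eta (b,a))+J_{\left( a+\eta (b,a)\right) ^{-}}^{\alpha }f(a)\right]\right\vert \leq \frac{\eta(b,a)}{2}\int_0^1 |t^\alpha - (1-t)^\alpha|\, |f'(a+t\eta(b,a))|\, dt.$$
The crucial difference from Theorem \ref{2.4} is that, instead of splitting the integrand by the ordinary H\"{o}lder inequality, I would apply the power-mean inequality (weighted H\"{o}lder) with the nonnegative weight $|t^\alpha - (1-t)^\alpha|$, which gives
$$\int_0^1 |t^\alpha - (1-t)^\alpha|\,|f'(a+t\eta(b,a))|\,dt \leq \left(\int_0^1 |t^\alpha-(1-t)^\alpha|\,dt\right)^{1-\frac1q}\left(\int_0^1 |t^\alpha - (1-t)^\alpha|\,|f'(a+t\eta(b,a))|^q\,dt\right)^{\frac1q}.$$
A pleasant by-product of this choice is that the restriction $\alpha\in[0,1]$ needed in Theorem \ref{2.4} (to use $|t_1^\alpha-t_2^\alpha|\leq|t_1-t_2|^\alpha$) is no longer invoked, so the estimate will hold for all $\alpha>0$, as claimed.

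The next step consists of two elementary integral evaluations. First, splitting at $t=\tfrac12$ and using that $(1-t)^\alpha\geq t^\alpha$ on $[0,\tfrac12]$ and the reverse on $[\tfrac12,1]$, one finds
$$\int_0^1 |t^\alpha-(1-t)^\alpha|\,dt = \frac{2}{\alpha+1}\left(1-\frac{1}{2^\alpha}\right).$$
Second, invoking the preinvexity of $|f'|^q$ through the bound $|f'(a+t\eta(b,a))|^q\leq (1-t)|f'(a)|^q+t|f'(b)|^q$ and exploiting that the weight $|t^\alpha-(1-t)^\alpha|$ is symmetric about $t=\tfrac12$ (the same symmetry reduction already used in Theorem \ref{2.3}), the weighted integral collapses to
$$\int_0^1 |t^\alpha-(1-t)^\alpha|\big[(1-t)|f'(a)|^q+t|f'(b)|^q\big]\,dt = \big(|f'(a)|^q+|f'(b)|^q\big)\,\frac{1}{\alpha+1}\left(1-\frac{1}{2^\alpha}\right).$$

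Finally, I would substitute both evaluations into the power-mean estimate and simplify. Writing $M=\frac{1}{\alpha+1}(1-2^{-\alpha})$, the first factor contributes $(2M)^{1-1/q}$ and the second $\big((|f'(a)|^q+|f'(b)|^q)\,M\big)^{1/q}$; the two powers of $M$ recombine into a single factor $M$, the powers of $2$ merge with the rewriting $\big(|f'(a)|^q+|f'(b)|^q\big)^{1/q}=2^{1/q}\big[\tfrac{|f'(a)|^q+|f'(b)|^q}{2}\big]^{1/q}$, and after cancelling against the leading $\eta(b,a)/2$ one lands exactly on the right-hand side of (\ref{2-9}). I expect the only genuine obstacle to be the symmetry reduction of the weighted integral in the second evaluation, together with careful bookkeeping of the powers of $2$ and $M$ in the final collapse; everything else is routine.
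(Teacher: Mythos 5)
Your proposal is correct and follows essentially the same route as the paper: the paper also applies the weighted H\"{o}lder (power-mean) inequality by writing $\left\vert t^{\alpha}-(1-t)^{\alpha}\right\vert =\left\vert t^{\alpha}-(1-t)^{\alpha}\right\vert ^{\frac{1}{p}}\left\vert t^{\alpha}-(1-t)^{\alpha}\right\vert ^{\frac{1}{q}}$, evaluates $\int_{0}^{1}\left\vert t^{\alpha}-(1-t)^{\alpha}\right\vert dt=\frac{2}{\alpha +1}\left( 1-\frac{1}{2^{\alpha}}\right) $, and bounds the weighted integral of $\left\vert f^{\prime }\right\vert ^{q}$ via preinvexity to obtain $\frac{1}{\alpha +1}\left( 1-\frac{1}{2^{\alpha}}\right) \left[ \left\vert f^{\prime }(a)\right\vert ^{q}+\left\vert f^{\prime }(b)\right\vert ^{q}\right] $, exactly as you do (the paper computes this last integral by splitting at $t=\frac{1}{2}$ rather than by your symmetry argument, but that is an immaterial difference). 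Your final bookkeeping of the powers of $2$ and of $M$ is also correct and reproduces the right-hand side of (\ref{2-9}).
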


\begin{proof}
From lemma\ref{2.2} and using H\"{o}lder inequality with properties of
modulus, we have%
\begin{eqnarray*}
&&\left\vert \frac{f(a)+f\left( a+\eta (b,a)\right) }{2}-\frac{\Gamma
(\alpha +1)}{2\eta ^{\alpha }(b,a)}\left[ J_{a^{+}}^{\alpha }f(a+\eta
(b,a))+J_{\left( a+\eta (b,a)\right) ^{-}}^{\alpha }f(a)\right] \right\vert
\\
&\leq &\frac{\eta (b,a)}{2}\dint\limits_{0}^{1}\left\vert t^{\alpha }-\left(
1-t\right) ^{\alpha }\right\vert ^{\frac{1}{p}+\frac{1}{q}}\left\vert
f^{\prime }\left( a+t\eta (b,a)\right) \right\vert dt \\
&\leq &\frac{\eta (b,a)}{2}\left( \dint\limits_{0}^{1}\left\vert t^{\alpha
}-\left( 1-t\right) ^{\alpha }\right\vert dt\right) ^{\frac{1}{p}}\left(
\dint\limits_{0}^{1}\left\vert t^{\alpha }-\left( 1-t\right) ^{\alpha
}\right\vert \left\vert f^{\prime }\left( a+t\eta (b,a)\right) \right\vert
^{q}dt\right) ^{\frac{1}{q}}.
\end{eqnarray*}%
On the other hand, we have%
\begin{eqnarray*}
\dint\limits_{0}^{1}\left\vert t^{\alpha }-\left( 1-t\right) ^{\alpha
}\right\vert dt &=&\dint\limits_{0}^{\frac{1}{2}}\left[ \left( 1-t\right)
^{\alpha }-t^{\alpha }\right] dt+\dint\limits_{\frac{1}{2}}^{1}\left[
t^{\alpha }-\left( 1-t\right) ^{\alpha }\right] dt \\
&=&\frac{2}{\alpha +1}\left( 1-\frac{1}{2^{\alpha }}\right) .
\end{eqnarray*}%
Since $\left\vert f^{\prime }\right\vert ^{q}$ is preinvex function on $A$,
we obtain%
\begin{equation*}
\left\vert f^{\prime }\left( a+t\eta (b,a)\right) \right\vert ^{q}\leq
(1-t)\left\vert f^{\prime }(a)\right\vert ^{q}+t\left\vert f^{\prime
}(b)\right\vert ^{q},\ \ t\in \left[ 0,1\right]
\end{equation*}%
and 
\begin{eqnarray*}
\dint\limits_{0}^{1}\left\vert t^{\alpha }-\left( 1-t\right) ^{\alpha
}\right\vert \left\vert f^{\prime }\left( a+t\eta (b,a)\right) \right\vert
^{q}dt &\leq &\dint\limits_{0}^{1}\left\vert t^{\alpha }-\left( 1-t\right)
^{\alpha }\right\vert \left[ (1-t)\left\vert f^{\prime }(a)\right\vert
^{q}+t\left\vert f^{\prime }(b)\right\vert ^{q}\right] dt \\
&=&\dint\limits_{0}^{\frac{1}{2}}\left[ \left( 1-t\right) ^{\alpha
}-t^{\alpha }\right] \left[ (1-t)\left\vert f^{\prime }(a)\right\vert
^{q}+t\left\vert f^{\prime }(b)\right\vert ^{q}\right] dt \\
&&+\dint\limits_{\frac{1}{2}}^{1}\left[ t^{\alpha }-\left( 1-t\right)
^{\alpha }\right] \left[ (1-t)\left\vert f^{\prime }(a)\right\vert
^{q}+t\left\vert f^{\prime }(b)\right\vert ^{q}\right] dt \\
&=&\frac{1}{\alpha +1}\left( 1-\frac{1}{2^{\alpha }}\right) \left[
\left\vert f^{\prime }(a)\right\vert ^{q}+\left\vert f^{\prime
}(b)\right\vert ^{q}\right]
\end{eqnarray*}

from here we obtain inequality (\ref{2-9}) which completes the proof.
\end{proof}

\begin{remark}
a) If in Theorem\ref{2.5}, we let $\eta (b,a)=b-a$ and $\alpha =1$ then
inequality (\ref{2-9})become inequality (\ref{0.1}) Theorem\ref{1.a}.

b) In Theorem\ref{2.5}, assume that $\eta $ satisfies condition C and using
inequality (\ref{2-11}) we get%
\begin{eqnarray*}
&&\left\vert \frac{f(a)+f\left( a+\eta (b,a)\right) }{2}-\frac{\Gamma
(\alpha +1)}{2\eta ^{\alpha }(b,a)}\left[ J_{a^{+}}^{\alpha }f(a+\eta
(b,a))+J_{\left( a+\eta (b,a)\right) ^{-}}^{\alpha }f(a)\right] \right\vert
\\
&\leq &\frac{\eta (b,a)}{\left( \alpha +1\right) }\left( 1-\frac{1}{%
2^{\alpha }}\right) \left[ \frac{\left\vert f^{\prime }(a)\right\vert
^{q}+\left\vert f^{\prime }(a+\eta (b,a))\right\vert ^{q}}{2}\right] ^{\frac{%
1}{q}}.
\end{eqnarray*}
\end{remark}

\end{document}